\def\lr{\mbox{\begin{picture}(7,10)
\put(0,8){\line(1,-1){10}}
\put(0,8){\line(1,0){10}}
\put(10,8){\line(0,-1){10}}
\end{picture}
}}
\DeclareMathOperator{\sech}{sech}
\newcommand{\beq}{\begin{eqnarray*}}
\newcommand{\feq}{\end{eqnarray*}}
\newcommand{\beqn}{\begin{eqnarray}}
\newcommand{\feqn}{\end{eqnarray}}
\newtheorem{theorem}{Theorem}[section]
\newtheorem{lemma}[theorem]{Lemma}
\theoremstyle{definition}
\newtheorem{example}[theorem]{Example}
\theoremstyle{remark}
\numberwithin{equation}{section}
\begin{document}

%\title[Wave breaking condition for Whitham type equation]{Improved wave breaking condition for the non-local Whitham type equation}
\title[Wave breaking condition for Whitham type equation]{An extension of Seliger's wave breaking condition for the nonlocal Whitham type equation}
%\title[Wave breaking of Whitham type equation]{Non-local Whitham type equation: an extension of Seliger's wave breaking condition}
%\title[Global solutions for Euler-Poisson system]{Global solutions for the two-dimensional Euler-Poisson system with attractive forcing}
%{Finite time blow up conditions for two-dimensional weakly restricted Euler-Poisson equations}
%{A CONSERVATION LAW WITH NONLOCAL FLUX AND ITS APPLICATIONS}
%    Information for first author
\author{Yongki Lee}
%    Address of record for the research reported here
\address{Department of Mathematical Sciences, Georgia Southern University, Statesboro,  30458}
\email{yongkilee@georgiasouthern.edu, WEB: sites.google.com/site/yongkimath/}

%    \thanks will become a 1st page footnote.
%\thanks{The first author was supported in part by NSF Grant \#000000.}
\keywords{Wave breaking, Whitham equation, Shallow water equations.}
\subjclass{Primary, 35L05; Secondary, 35B30}
\begin{abstract}

We extend the wave breaking condition in Seliger's work [Proc. R. Soc. Lond. Ser. A., 303 (1968)], which has been used widely to prove wave breaking  phenomena for nonlinear nonlocal shallow water equations.

\end{abstract}
\maketitle

%admits global smooth solutions for a large set of initial
%configurations, so called sub-critical conditions which are not necessarily confined to
%any prefered small neighborhood.

\section{Introduction and statement of main result}
In this paper, we are concerned with the wave breaking phenomena - bounded solutions with unbounded derivatives - for the nonlocal Whitham  type equation \cite{Wh74}:
\begin{equation}\label{whitham_0}
\left\{
  \begin{array}{ll}
   \partial_t u + u \partial_x u + \int_{\mathbb{R}} K (x-\xi)u_{\xi} (t, \xi) \, d\xi =0, & t>0, x \in \mathbb{R}, \\
    u(0,x)=u_0 (x), &   x\in \mathbb{R},\hbox{}
  \end{array}
\right.
\end{equation}
where 
$K(x)=\frac{1}{2\pi}\int_{\mathbb{R}} c(\kappa) e^{i\kappa x} \, d\kappa,$
is the Fourier transform of the desired phase velocity $c(\kappa)$. The function $u(t,x)$ models the deflection of the fluid surface from the rest position and the equation was proposed by Whitham as an alternative to the Korteweg-de Vries (KdV) equation for the description of wave motion at the surface of a perfect fluid.% by simplified evolution equations. 

Whitham emphasized that the breaking phenomena  is one of the most intriguing long-standing problems of water wave theory, and since the KdV equation can't describe breaking, he suggested \eqref{whitham_0} with the singular kernel
\begin{equation}\label{singular}
K_0 (x) = \int_{\mathbb{R}} \bigg{(}\frac{\tanh \xi}{\xi}  \bigg{)}^{1/2} e^{i\xi x} \, d \xi,
\end{equation} as a model equation combining full linear dispersion with long wave nonlinearity, and conjectured wave breaking in \eqref{whitham_0}-\eqref{singular}.

The formal approach to prove wave breaking for Whitham type equation %(and the Camassa-Holm equation)
 originated from Seliger's 
ingenious argument \cite{Se68}, i.e., tracing the dynamics of
\begin{equation}\label{infsup}
m_1 (t) : = \inf_{x \in \mathbb{R}} [u_x (t,x)] \ \text{and} \ m_2 (t) : = \sup_{x \in \mathbb{R}} [u_x (t,x)],
\end{equation}
attained at $x=\xi_1 (t)$ and $x=\xi_2 (t)$, respectively, provided that $K$ be bounded and integrable, among other hypotheses.
The mapping $t \rightarrow \xi_i (t)$, however, may be multi-valued so the curves in general are not necessarily well-defined. In addition to this, to carry out Seliger's formal analysis, one needs to assume that the
curves $\xi_1 (t)$ and $\xi_2 (t)$ are smooth. These additional strong assumptions were shown
unnecessary later by the rigorous analytical proof of Constantin and Escher \cite{CE98}.

Following the argument in \cite{Se68,CE98}, in this paper $K(x)$ is assumed to be regular (smooth and integrable over $\mathbb{R}$), symmetric and monotonically decreasing on $x \in [0, \infty)$. For non-integrable $K(x)$ case, we refer to \cite{VH17} and references therein.
Differentiating the first equation in \eqref{whitham_0} with respect to $x$ and evaluating the resulting equations at $x=\xi_1 (t)$ and $x=\xi_2 (t)$, two coupled differential \emph{inequalities} are deduced in \cite{Se68}:
\begin{subequations} \label{m_eqn_k}
    \begin{equation} \label{m1_eqn_k}
       \frac{dm_1}{dt} \leq -m^2 _1 (t) + K(0)(m_2 (t) -m_1 (t)) \ \ a.e.,
    \end{equation}
    \begin{equation} \label{m2_eqn_k}
        \frac{dm_1}{dt} \leq -m^2 _2 (t) + K(0)(m_2 (t)-m_1 (t)) \ \ a.e,
    \end{equation}
\end{subequations}
where $K(0)>0$.

The wave breaking condition of the Whitham type equation in \cite{Se68} is 
\begin{equation}\label{Se68con}
\inf_{x \in \mathbb{R}} [u' _0 (x)]+\sup_{x \in \mathbb{R}} [u' _0 (x)]\leq -2K(0),
\end{equation}
indeed, represents that a sufficiently asymmetric initial profile yields wave breaking in finite time. The aforementioned arguments and the rigorous analytical proof in \cite{CE98} have been considered as the cornerstone work for proving wave breaking for nonlinear nonlocal shallow water equations. Further, the condition \eqref{Se68con} has been used widely in many studies, including very recent works in \cite{GH18, YL20}. This is because it preserves a useful structure for the proof: if $m_1 (0) +m_2 (0) \leq -2K(0) $, then this relation remains so for all time.

The main contribution of this study is extending \eqref{Se68con} into a larger set, thereby obtaining a lower threshold for the wave breaking and extending the works in several aforementioned papers. Also, we provide an upper bound of wave breaking time. Our proof is base on simple phase plane analysis equipped with delicate time estimates, e.g \cite{YL22}.

To state our main result, for the sake of simplicity, we let $K(0):=1$ then \eqref{m_eqn_k} is reduced to
\begin{subequations} \label{m_eqn}
    \begin{equation} \label{m1_eqn}
       m' _1 (t) \leq -m^2 _1 (t) + m_2 (t) -m_1 (t) \ \ a.e.,
    \end{equation}
    and
    \begin{equation} \label{m2_eqn}
        m' _2 (t) \leq -m^2 _2 (t) + m_2 (t) -m_1 (t) \ \ a.e.
    \end{equation}
\end{subequations}
From \eqref{infsup}, we necessarily have $m_1 (t) \leq 0 \leq m_2 (t)$, as long as they exist. 

We now let $$\Omega:=\{(m_1 , m_2) \ | \ m_1 <-2 \ \text{and} \ 0\leq m_2 < m^2 _1 + m_1  \},$$
as shown in Figure \ref{fig1}, and state the main theorem.

%For notational convenience, in our proof, we sometimes omit the notation for dependence on $t$.

\begin{figure}[ht]
\begin{center}
\includegraphics[width=120mm]{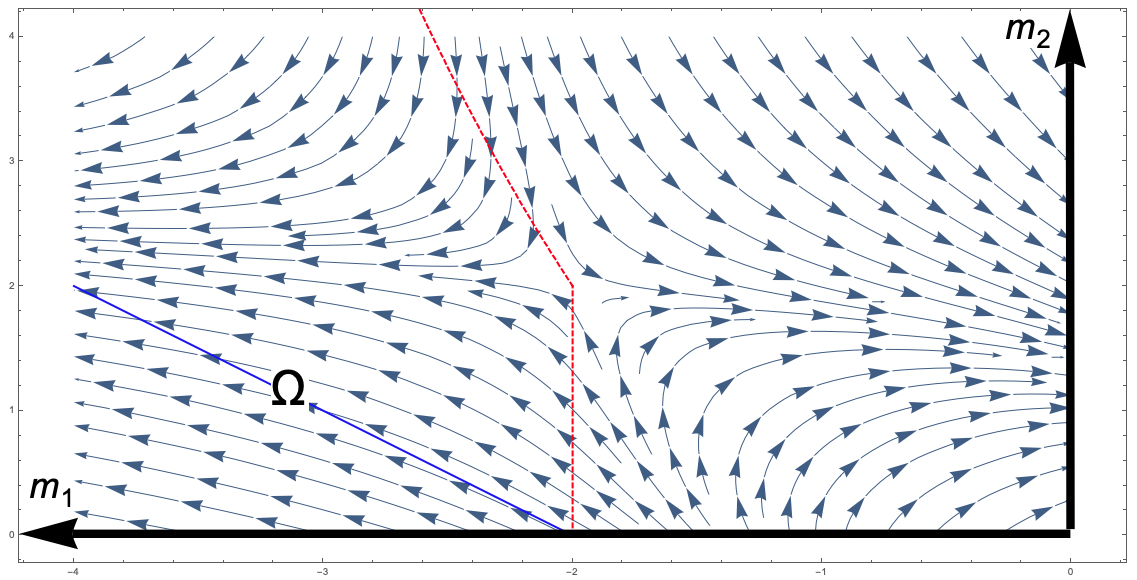}
\end{center}
\caption{The area in the left of the red lines represents $\Omega$ in Theorem \ref{main_thm}. The region below the blue line represents the wave breaking condition in Seliger's work \cite{Se68}. Small arrows display phase diagrams of the corresponding system of differential \emph{equations} in \eqref{corrdpq}.  }\label{fig1}
\end{figure}

\begin{theorem}\label{main_thm}
Consider \eqref{whitham_0}. If $u_0 \in H^{\infty}$ satisfies
$$(\inf_{x \in \mathbb{R}}[u' _0(x)] , \sup_{x \in \mathbb{R}}[u' _0 (x)]) \in \Omega,$$
then the solution must develop wave breaking before $T^*$, with
$$T^* =\frac{1}{2}\log\bigg{(} \frac{m_1 (0)}{2+m_1 (0)} \bigg{)} + \max\bigg{\{}0, \frac{m_1 (0) + m_2 (0)}{2m_1 (0)(2+ m_1 (0))}\bigg{\}}.$$
%then the solution must develop wave breaking in a finite time, and we we observe wave breaking before some
%$$T<\frac{1}{2}\log\bigg{(} \frac{m_1 (0)}{2+m_1 (0)} \bigg{)} + \max\bigg{\{}0, \frac{m_1 (0) + m_2 (0)}{m_1 (0)(2+ m_1 (0))}\bigg{\}}$$
%we observe wave breaking before
\end{theorem}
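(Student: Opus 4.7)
The plan is to read off the dynamics from the phase portrait in Figure~\ref{fig1}: the region $\Omega$ is positively invariant for the differential inequalities \eqref{m_eqn}, and inside $\Omega$ the line $\{m_1+m_2=0\}$ separates a (possibly empty) \emph{drive-down} stage, during which $m_1+m_2$ is pushed from its initial positive value to zero, from a subsequent Seliger-type \emph{blow-up} stage in which $m_1\to-\infty$. The two terms of $T^*$ are the time budgets for blow-up and drive-down, respectively.

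First I would establish positive invariance of $\Omega$. The boundary $\{m_2=0\}$ is harmless since $m_2\ge 0$ by definition; on $\{m_1=-2\}$, \eqref{m1_eqn} gives $m_1'\le -4+m_2+2<0$ because $m_2<m_1^2+m_1=2$ there, so this vertical segment is never crossed. The delicate piece is the parabolic arc, where $\psi:=m_1^2+m_1-m_2=0$. Differentiating, $\psi'=(2m_1+1)m_1'-m_2'$; at a contact point $(2m_1+1)m_1'\ge 0$ since $2m_1+1<0$ and \eqref{m1_eqn} forces $m_1'\le 0$ there, while $-m_2'\ge m_2^2-m_2+m_1=m_1^3(m_1+2)>0$ for $m_1<-2$, so $\psi'>0$ strictly. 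A Nagumo-type barrier argument, rigorous in the almost-everywhere setting of \cite{CE98}, then precludes exit. Two consequences used later: $m_1(t)\le m_1(0)$ and $m_2(t)\le m_1(t)^2+m_1(t)$ while the solution lives.

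For the drive-down stage, assume $m_1(0)+m_2(0)>0$. Inside $\Omega\cap\{m_1+m_2>0\}$ one has $m_2>-m_1>2$, so $m_2(m_2-2)>(-m_1)(-(m_1+2))=m_1(m_1+2)$. Adding \eqref{m1_eqn} and \eqref{m2_eqn} and regrouping,
\begin{equation*}
(m_1+m_2)'\le -m_1(m_1+2)-m_2(m_2-2)<-2m_1(m_1+2)\le -2m_1(0)(m_1(0)+2),
\end{equation*}
where the last bound uses that $x\mapsto x(x+2)$ is decreasing on $(-\infty,-2)$ together with $m_1\le m_1(0)$. Integration shows $m_1+m_2$ reaches $0$ by time at most $(m_1(0)+m_2(0))/[2m_1(0)(m_1(0)+2)]$, which is the second term of $T^*$; when $m_1(0)+m_2(0)\le 0$ this phase is empty, which is exactly the role of the $\max\{0,\cdot\}$.

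Once $m_1+m_2\le 0$ it remains so (at equality the same computation gives $(m_1+m_2)'<0$), and $m_2\le -m_1$ reduces \eqref{m1_eqn} to $m_1'\le -m_1^2-2m_1=-m_1(m_1+2)$. Partial fractions, $1/[m_1(m_1+2)]=\tfrac12[1/m_1-1/(m_1+2)]$, and separation of variables yield a blow-up time bounded by $\tfrac12\log(m_1^\star/(m_1^\star+2))$ counted from the start of this stage, where $m_1^\star$ is the value of $m_1$ at that instant. Since $x\mapsto\tfrac12\log(x/(x+2))$ is increasing on $(-\infty,-2)$ and $m_1^\star\le m_1(0)$, this is at most $\tfrac12\log(m_1(0)/(m_1(0)+2))$, the first term of $T^*$; summing the two budgets delivers the stated bound. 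The hardest step is Step~1: because \eqref{m_eqn} holds only almost everywhere and the curves $\xi_i(t)$ need not be smooth, the infinitesimal barrier argument has to be upgraded via the rigorous Constantin--Escher framework \cite{CE98}.
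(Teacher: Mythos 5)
Your proof follows the same three-stage architecture as the paper: establish positive invariance of $\Omega$, drive $m_1+m_2$ to zero in finite time, then run a Riccati comparison for the terminal blow-up. The invariance argument and the final integration match the paper essentially line by line. Where you diverge is in the drive-down estimate of Lemma~\ref{time}. The paper completes the square, writing $(m_1+m_2)'\le -(m_1+1)^2-(m_2-1)^2+2$, confines the trajectory to the triangle with vertices $(m_1(0),m_2(0))$, $(-m_2(0),m_2(0))$, $(m_1(0),-m_1(0))$, and minimizes $(m_1+1)^2+(m_2-1)^2$ over that triangle to extract the constant $-2m_1(0)(2+m_1(0))$. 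You instead keep the raw factorization $(m_1+m_2)'\le -m_1(m_1+2)-m_2(m_2-2)$, note that $m_2>-m_1>2$ in $\Omega\cap\{m_1+m_2>0\}$ forces $m_2(m_2-2)>m_1(m_1+2)$, and then use monotonicity of $m_1(t)$ together with monotonicity of $x\mapsto x(x+2)$ on $(-\infty,-1)$ to reach the same constant. This avoids the triangular-region geometry entirely and is arguably cleaner; it also exposes that the constant in $T^*$ is governed solely by the evolution of $m_1$. One small point: you list $m_1(t)\le m_1(0)$ as a ``consequence'' of Step~1, but invariance alone does not give monotonicity of $m_1$; you additionally need the one-line observation that $m_1'\le -m_1^2+m_2-m_1<0$ throughout $\Omega$ because $m_2<m_1^2+m_1$ there. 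The paper makes this explicit inside Lemma~\ref{time}; you should state it rather than fold it silently into the invariance step.
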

\break
Several remarks are in order regarding this result.

1. It is easy to see  that $T^* > 0$ on $\Omega$, because $m_1 (0)<-2$.

2. The wave breaking condition in Seliger \cite{Se68} is
$$m_1 (0) + m_2 (0) \leq -2,$$
which is represented in Figure \ref{fig1}. One can see that the condition in Theorem \ref{main_thm} is somewhat \emph{optimal}. %Indeed, consider a threshold  $m_2 = g(m_1)$ for wave breaking (if any). On this threshold, \eqref{m1_eqn} is reduced to 
%$$m' _1 (t) \leq -m^2 _1 (t) + g(m_1 (t)) -m_1 (t),$$
%so that $g(m_1)$ is at most $m_1 ^2 + m_1$ to allow $m_1 ' \leq 0$,\\
Indeed, suppose that there is a threshold  $m_2 = g(m_1)$ for wave breaking where $g(m_1) > m^2 _1 + m_1$. We want to $(m_1 (t), m_2 (t))$ stays on one side of the threshold, that is $m_2 (0) \leq g(m_1 (0))$ implies $m_2 (t) \leq g(m_1 (t))$ for all $t$. However, on this threshold, \eqref{m1_eqn} is reduced to 
$$m' _1 (t) \leq -m^2 _1 (t) + g(m_1 (t)) -m_1 (t).$$
We see that the right hand side of the above inequality is positive. Thus, $m' _1 (t)$ can be positive on the threshold, so that $(m_1 (t), m_2 (t))$  may cross the threshold from one side to other side.
%so that $g(m_1)$ is at most $m_1 ^2 + m_1$ to allow $m_1 ' \leq 0$,\\

3. Sometimes, there are some monotonicity relations between a system of differential \emph{inequalities} and a corresponding system of differential \emph{equations} e.g., \cite{BL20, CT09}. However, we note that there is no direct comparison between \eqref{m_eqn} and the corresponding system of differential \emph{equations}
\begin{equation}\label{corrdpq}
x'=-x^2 + y-x, \ \ y' = -y^2 + y-x.
\end{equation}
 This means that even if we find a threshold condition for \eqref{corrdpq}, that condition may not work for \eqref{m_eqn}.
 
4. There are some generalizations of the system in \eqref{m_eqn},
$$m' _i  \leq -\alpha_i m^2 _i + \beta(m_2 - m_1) + \gamma, \ \ i=1,2.$$
In \cite{YL20}, the author discussed wave breaking conditions for  the above system with $\gamma\equiv 0$, and $\alpha_i \equiv \alpha_i (t)$ are time dependent functions that are allowed to change their values and signs over time.  For $\alpha_1=\alpha_2 =positive \ constant$, and $\gamma$ non-negative constant % and $c\equiv f(t)$
 case, one can consult  \cite{MLQ16}, where the authors also extended Seliger's  work, and obtained wave breaking time estimates. We should point out that under the same initial data, the $T^*$ in the present Theorem \ref{main_thm} is sharper than the one in   \cite{MLQ16}. Moreover, we present an alternative, yet simpler, proof based on phase plane analysis.

The details of the proof of Theorem \ref{main_thm} is carried out in the rest of the paper.

\section{Proof of Theorem \ref{main_thm}}
We first construct the invariant region for \eqref{m_eqn}.
\begin{lemma}\label{invariant}
Let 
$$\Omega : = \{(m_1 , m_2) \ | \ m_1 <-2, \ and \ 0\leq m_2 < m^2 _1 +m_1\}.$$
If $(m_1(0) , m_2(0)) \in \Omega$, then $(m_1(t) , m_2(t)) \in \Omega$ for all $t>0$.
\end{lemma}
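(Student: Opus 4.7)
I propose a standard invariant-region argument with a barrier function. Let
$$F(m_1,m_2):=m_2-m_1^2-m_1,$$
so that $\Omega=\{m_1<-2,\;m_2\ge 0,\;F<0\}$. Since $m_2(t)\ge 0$ is the a priori structural constraint noted right after \eqref{m_eqn}, the only boundaries through which the trajectory could leave $\Omega$ are the right edge $\{m_1=-2\}$ and the upper arc $\{F=0,\;m_1\le-2\}$. Assume for contradiction that $T:=\inf\{t>0:(m_1(t),m_2(t))\notin\Omega\}$ is finite, so $(m_1(T),m_2(T))\in\partial\Omega$ and $(m_1(t),m_2(t))\in\Omega$ for $t<T$. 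I rule out each case.

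\emph{Right edge} ($m_1(T)=-2$ with $F(T)<0$, so $m_2(T)<2$). Substituting $m_1=-2$ into \eqref{m1_eqn} gives $m_1'(t)\le m_2(t)-2$. By continuity $m_2(t)<2$ on some one-sided neighborhood $(T-\delta,T]$, so $m_1'(t)<0$ there, forcing $m_1(T-\delta)>m_1(T)=-2$. This contradicts $(m_1(t),m_2(t))\in\Omega$ for $t<T$.

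\emph{Upper arc} ($F(T)=0$, $m_1(T)\le-2$). Since $m_1<-1/2$ in a neighborhood of $T$, the weight $-(2m_1+1)$ is strictly positive there. Multiplying \eqref{m1_eqn} by this positive weight preserves the inequality and combining with \eqref{m2_eqn} yields
$$F'(t)\le\bigl(-m_2^2+m_2-m_1\bigr)-(2m_1+1)\bigl(-m_1^2+m_2-m_1\bigr).$$
After expansion the right side equals $-m_2(m_2+2m_1)+m_1^2(2m_1+3)$. Substituting $m_2=m_1(m_1+1)$ on the curve and simplifying produces the clean factorization $-m_1^3(m_1+2)$, which is strictly negative for $m_1<-2$. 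By continuity, $F'\le-\varepsilon<0$ in a small one-sided neighborhood of $T$, forcing $F(T-h)>F(T)=0$ for small $h>0$, contradicting $F<0$ on $[0,T)$.

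\textbf{Main obstacle.} The algebraic simplification in the upper-arc case is the delicate step, together with careful bookkeeping of inequality directions when combining two one-sided bounds. Two structural features make the argument succeed: first, $-(2m_1+1)>0$ throughout $\Omega$, so the two differential inequalities can be aggregated with a positive weight into a one-sided bound on $F'$; second, on the specific curve $m_2=m_1^2+m_1$ the resulting polynomial collapses to $-m_1^3(m_1+2)$, whose sign change at $m_1=-2$ is exactly the cutoff in the definition of $\Omega$. These coincidences are precisely what motivate the region $\Omega$ to be stated in its present form.
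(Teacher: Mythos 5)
Your proof takes essentially the same route as the paper's: you use the barrier function $F=m_2-m_1^2-m_1$ (the negative of the paper's $f$), consider the first exit time, and derive a sign contradiction from a differential inequality on $F$. Your algebra is correct and matches the paper's — combining \eqref{m1_eqn} and \eqref{m2_eqn} with the positive weight $-(2m_1+1)$ gives $F'\le -m_2(m_2+2m_1)+m_1^2(2m_1+3)$, which on the curve $m_2=m_1^2+m_1$ collapses to $-m_1^3(m_1+2)$, exactly the paper's $f'(t_1)\ge m_1^3(m_1+2)$ with the sign flipped.

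There is, however, a genuine gap at the corner point $(m_1,m_2)=(-2,2)$, which falls in neither of your cases as argued. Your right-edge case explicitly assumes $F(T)<0$, i.e., $m_2(T)<2$, so it excludes the corner. Your upper-arc case formally includes $m_1(T)=-2$, but there the factorization $-m_1(T)^3(m_1(T)+2)$ evaluates to $0$, not a strictly negative number, so the ``by continuity, $F'\le-\varepsilon<0$'' step collapses. This is precisely the delicate point the paper addresses with its $B_\epsilon(-2,2)\cap\Omega$ remark. The cleanest repair is to observe that inside $\Omega$ one has $m_1'\le -m_1^2+m_2-m_1 < -m_1^2+(m_1^2+m_1)-m_1=0$ (using the defining inequality $m_2<m_1^2+m_1$), so $m_1$ is strictly decreasing on $[0,T)$ and hence $m_1(T)\le m_1(0)<-2$. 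This rules out the corner outright — and in fact makes the separate right-edge case superfluous, since the trajectory can never reach $m_1=-2$ at all. A minor additional point: in the right-edge case you write ``substituting $m_1=-2$ into \eqref{m1_eqn} gives $m_1'(t)\le m_2(t)-2$'' on a one-sided neighborhood of $T$, but the substitution $m_1=-2$ is only exact at $t=T$; the intended argument is that $-m_1^2+m_2-m_1$ is continuous in $(m_1,m_2)$ and equals $m_2(T)-2<0$ at $(m_1(T),m_2(T))$, hence remains negative on a neighborhood. The conclusion is fine, but the phrasing should be tightened.
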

\begin{proof}
From the phase plane of the corresponding system of differential \emph{equations}, (see Figure \ref{fig1}) one can easily see that $\Omega$ forms an invariant space for \eqref{corrdpq}. One may expect the samething holds for \eqref{m_eqn}, but we rigorously prove this, as there is no direct comparison between \eqref{corrdpq} and \eqref{m_eqn}. 

We first show that if $m_2 (0) < m^2 _1(0)+m_1 (0)$, then it remains so for all time, as long as $m_1 (t) <-2$. Let
$$f(t):=m^2 _1 (t) + m_1 (t) - m_2 (t).$$
It suffices to show that $f(0)>0$ implies $f(t)>0$ for all $t > 0$. Suppose $t_1$ is the earliest time when this
assertion is violated. Then
$f(t_1)=m^2 _1 (t_1) + m_1 (t_1) - m_2 (t_1)=0.$ Further, since $f(t)>0$ for $t<t_1$ and $f(t_1)=0$, we have
$$f' (t_1) \leq 0.$$
Consider
\begin{equation*}
\begin{split}
f'(t)&= 2m_1 (t)m' _1 (t) + m' _1 (t) - m' _2 (t)\\
&=\big{(}2m_1 (t) +1 \big{)} m'  _1 (t) - m' _2 (t)\\
&\geq (2m_1 (t) + 1) (-m^2 _1 (t) + m_2 (t)- m_1 (t)) - (-m^2 _2 (t) +m_2 (t) -m_1 (t)).
\end{split}
\end{equation*}
Once we evaluate the above inequality at $t=t_1$, we obtain
\begin{equation*}
\begin{split}
f'(t_1) &\geq (2m_1 (t) + 1) \cdot 0 - (-m^2 _2 (t_1) +m_2 (t_1) -m_1 (t_1))\\
&=(m_1 (t_1))^3 (2+ m_1 (t_1))\\
&>0,
\end{split}
\end{equation*}
where the last inequality holds because $m_1 (t_1) < -2$. This gives the contradiction.

Now we show that if $(m_1 (0), m_2 (0))\in \Omega$, then $m_1 (t)<-2$ for all time. 
%$m_2 (t) \neq 2$ for all time if $m_1(t) =-2$. This is because
Suppose not, that is, let $t_2 >0$ be the earliest time when the assertion is violated. Then $m_1 (t_2)=-2$. Also, since $m_1 (t)<-2$ for $t<t_2$ and $m_1 (t_2) =-2$, we have
$$m' _1 (t_2) \geq 0.$$
However, from \eqref{m1_eqn},
$$m' _1 (t_2) \leq -4 + m_2 (t_2) +2<0,$$
because $m_2 (t_2) <2$. 
(we can exclude $m_2 (t_2)=2$ case because, since $(m_1 (0), m_2 (0))\in \Omega$, we see that $(m_1 (t), m_2(t))\neq (-2,2)$ for all $t$, unless $(m_1 (t), m_2 (t))$ exists $\Omega$ through the line segment $\{(m_1 , m_2) \ | \ m_1 =-2, \ 0 \leq m_2 <2 \}$. Indeed, on $B_{\epsilon}(-2,2) \cap \Omega$, one can see that $m' _1 <0$. Here,  $B_{\epsilon}(-2,2)$ is the disk with radius $\epsilon$ centered at $(-2,2)$.) This gives the contradiction and completes the proof.
\end{proof}

\begin{figure}[ht]
\begin{center}
\includegraphics[width=120mm]{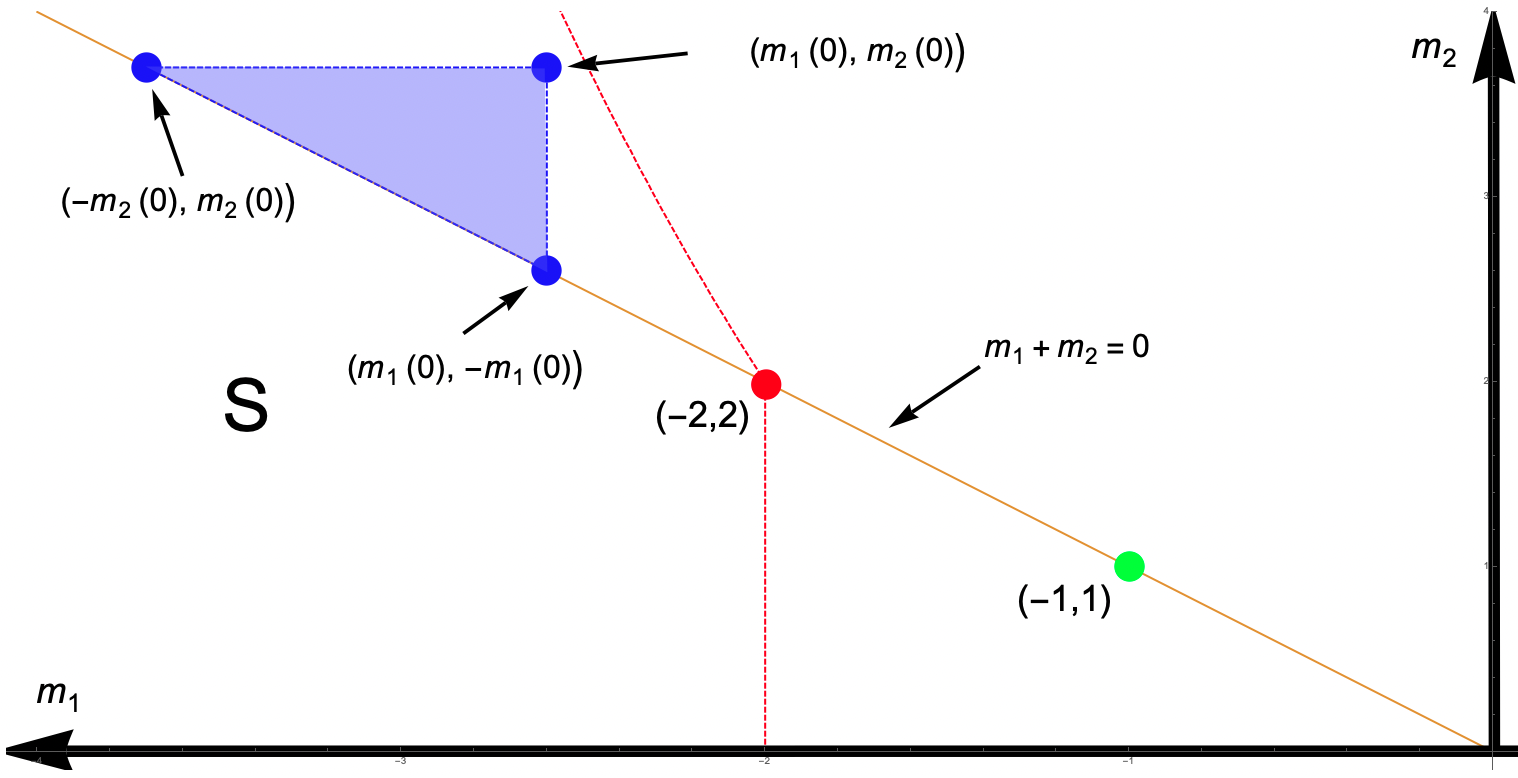}
\end{center}
\caption{Region $S$ and the triangle used in Lemma \ref{time}}\label{fig2}
\end{figure}

%\begin{proof}
%Consider some function $m_2 = g(m_1)$ on $m_1 <-2$. On $m_2 = g(m_1)$, () and () lead to
%$$m' _1 \leq -m^2 _1 + g(m_1) - m_1,$$
%and
%$$m' _2 \leq - (g(m_1))^2 + g(m_1) - m_1,$$
%respectively.
%We require $g(m_1) = m^2 _1 + m_1$ so that 
%$$m' _1 \leq 0,$$
%and
%$$m'_2 \leq - (m^2 _1 + m_1)^2 + (m^2 _1 + m_1) -m_1 = -m^3 _1 (2+m_1) \leq 0,$$
%when $m_1 \leq -2$.
%\end{proof}

We show that if $(m_1 (0), m_2(0)) \in \Omega$, then $m_1 (t) + m_2 (t) \leq 0$ in finite time, and it remains so for all time.
\begin{lemma}\label{time}
If $(m_1 (0), m_2(0)) \in \Omega$, then there exists $t^* \in [0, \infty)$ such that
$$m_1 (t) + m_2 (t) \leq 0,$$
for all $t\geq t^*$, where
\begin{equation}\label{tstar}
t^* = \max\bigg{\{}0, \frac{m_1 (0) + m_2 (0)}{2m_1 (0)(2+ m_1 (0))}\bigg{\}}.
\end{equation}
\end{lemma}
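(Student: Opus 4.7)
Set $S(t) := m_1(t) + m_2(t)$; the goal is to produce a strictly negative uniform upper bound on $S'(t)$ throughout the slice $\{S \geq 0\} \cap \Omega$, so that $S$ must cross zero in finite time and then be trapped. First I would invoke Lemma \ref{invariant} to keep the trajectory inside $\Omega$, so that $m_2(t) < m_1^2(t) + m_1(t)$ for all $t \geq 0$. Plugging this strict inequality into \eqref{m1_eqn} gives $m_1'(t) < 0$ a.e., hence $m_1$ is non-increasing and $m_1(t) \leq m_1(0) < -2$ throughout.

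Next, I would add \eqref{m1_eqn} and \eqref{m2_eqn} and rearrange the sum as
$$S'(t) \leq -m_1(t)\bigl[m_1(t)+2\bigr] - m_2(t)\bigl[m_2(t)-2\bigr] \quad \text{a.e.}$$
On the sub-region where $S(t) \geq 0$ one has $m_2(t) \geq -m_1(t) \geq -m_1(0) \geq 2$. Because $\xi \mapsto \xi(\xi+2)$ is decreasing on $(-\infty,-1)$ and $\eta \mapsto \eta(\eta-2)$ is increasing on $[1,\infty)$, this yields
$$m_1(m_1+2) \geq m_1(0)(m_1(0)+2) \quad \text{and} \quad m_2(m_2-2) \geq (-m_1(0))(-m_1(0)-2) = m_1(0)(m_1(0)+2).$$
Combining the two gives $S'(t) \leq -c$ a.e.\ on $\{S \geq 0\} \cap \Omega$, where $c := 2 m_1(0)(m_1(0)+2) > 0$.

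With this key estimate the proof splits into two cases. If $S(0) \leq 0$, I take $t^* = 0$ and rule out any later excursion into $\{S > 0\}$ by a barrier argument: at the infimum $\tau$ of times with $S(\tau) > 0$ one would have $S(\tau) = 0$ and $S'(\tau) \geq 0$, contradicting $S'(\tau) \leq -c < 0$. If $S(0) > 0$, integrating $S' \leq -c$ forces $S$ to reach $0$ no later than $S(0)/c = (m_1(0)+m_2(0))/(2 m_1(0)(2+m_1(0)))$, which is precisely the $t^*$ of \eqref{tstar}; the barrier argument then keeps $S \leq 0$ for all subsequent times.

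The main delicate point is the barrier step, whose validity rests on the fact that at the first zero-crossing of $S$ the state still belongs to $\Omega$, so the uniform bound $S'(\tau) \leq -c$ continues to apply; this is where Lemma \ref{invariant} is used essentially. Once that is accepted, everything else is a standard integration of an a.e.\ differential inequality for the (absolutely continuous) functions $m_1, m_2$, and the closed-form value of $t^*$ drops out of the estimate $S(t) \leq S(0) - c t$.
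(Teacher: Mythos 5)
Your proof is correct, and it yields the same sharp constant $c = 2m_1(0)(m_1(0)+2)$ and hence the same value of $t^*$, but it reaches that bound by a somewhat different route than the paper. The paper first shows that \emph{both} $m_1$ and $m_2$ are strictly decreasing on $\Omega\cap\{m_1+m_2>0\}$, deduces that the trajectory is confined to the triangle with vertices $(m_1(0),m_2(0))$, $(-m_2(0),m_2(0))$, $(m_1(0),-m_1(0))$, rewrites the quadratic sum as $-(m_1+1)^2-(m_2-1)^2+2$, and minimizes $(m_1+1)^2+(m_2-1)^2$ over that triangle (the minimizer being the vertex $(m_1(0),-m_1(0))$). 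You instead keep the factored form $-m_1(m_1+2)-m_2(m_2-2)$, use only the monotonicity of $m_1$ (which holds on all of $\Omega$, not merely on $\Omega\cap S^c$) to pin $m_1(t)\le m_1(0)$, feed that together with $m_2\ge -m_1\ge -m_1(0)$ into the monotonicity of $\xi\mapsto\xi(\xi+2)$ on $(-\infty,-1)$ and $\eta\mapsto\eta(\eta-2)$ on $[1,\infty)$, and obtain the same lower bound for each factor. The payoff is that you never need to show $m_2$ is decreasing and you avoid the triangle containment and the geometric closest-point argument entirely; the cost is essentially nil. The barrier step (pointwise evaluation of an a.e.\ differential inequality at the first crossing time) is handled at the same level of informality as the paper itself, so no objection there. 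Both approaches rely essentially on Lemma~\ref{invariant} to guarantee $m_1(t)<-2$ and $m_2(t)<m_1^2(t)+m_1(t)$ persist.
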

\begin{proof}
Suppose $(m_1 (0), m_2(0)) \in \Omega$. 

It is easy to see that if 
$$m_1 (0)+m_2 (0)\leq 0$$
already, then it remains so for all time. Indeed, for a fixed $t$, the inequalities in \eqref{m_eqn} with $m_1 (t)+m_2 (t)=0$ give
$$m' _i (t)\leq  -m^2 _1 (t) -2m_1(t) = - m_1 (t) (m_1 (t) +2) <0,$$
for $i=1,2$. Here the last inequality holds because $m_1 (t) < -2$ due to Lemma \ref{invariant}. Summing up, we get
$$(m_1 (t) + m_2 (t))' <0.$$ This proves the claim. In addition to this, defining $$S : = \Omega \cap \{(m_1 , m_2) \ | \  m_1 + m_2 \leq 0\},$$ we see that together with Lemma \ref{invariant} the above assertion states that $S$ forms an invariant space for \eqref{m_eqn}. See Figure \ref{fig2}.

Now we consider the case $m_1 (0) + m_2 (0) >0$  (i.e., $(m_1 (0), m_2 (0))\in \Omega \cap S^{c}$). For a fixed $t$, any $(m_1(t) , m_2(t)) \in \Omega \cap S^{c}$ leads to
$$m' _1 \leq -m^2 _1 + m_2 - m_1 < - m^2 _1 + (m^2 _1 + m_1) -m_1=0,$$
and
$$m' _2 \leq -m^2 _2 + m _2 -m_1 < -m^2 _2 + m_2 + m_2 = -m_2 (m_2 -2) <0.$$
Thus, both $m_1$ and $m_2$ are strictly decreasing on $\Omega \cap S^{c}$. Applying this, we find that any $(m_1 (t) , m_2 (t))$ initiated from $(m_1 (0), m_2 (0))\in \Omega \cap S^{c}$ stays in the triangular region 
$$\lr^{(m_1 (0), m_2 (0))}$$
with vertices $(m_1 (0), m_2 (0))$, $(-m_2 (0), m_2 (0))$ and $(m_1 (0), -m_1 (0))$, unless $(m_1 (t) , m_2 (t))$ touches $m_2 + m_1 =0$ line (see Figure \ref{fig2}). In other words, $(m_1 (t) , m_2 (t))$ may exit the triangular region only through the hypotenuse of the triangle.

We shall show that $(m_1 (t) , m_2 (t))$ touches $m_2 + m_1 =0$ line in finite time. On the triangle  $\lr^{(m_1 (0), m_2 (0))}$, \eqref{m_eqn} leads to
\begin{equation*}
\begin{split}
(m_1 + m_2)' &\leq -m^2 _1 -2m_1 - m^2 _2 +2m_2\\
&=-(m_1 +1)^2 - (m_2 -1)^2 +2\\
&\leq \max_{(m_1 ,m_2)\in \lr } \big{\{}-(m_1 +1)^2 - (m_2 -1)^2 +2  \big{\}}\\
&= -\min_{(m_1 ,m_2)\in \lr } \big{\{}(m_1 +1)^2 + (m_2 -1)^2   \big{\}}+2\\
&=-(m_1 (0)+1)^2 - (-m_1 (0) -1)^2 +2,
\end{split}
\end{equation*}
where the last equality holds because the closest point to $(-1,1)$ (the green dot in Figure \ref{fig2}) on the  the triangle $\lr^{(m_1 (0), m_2 (0))}$ is $(m_1 (0), -m_1 (0))$. Thus, since $m_1 (0)<-2$, we see that
$$(m_1+ m_2)' \leq -2m_1 (0)(2+ m_1 (0))<0,$$
on $\lr^{(m_1 (0), m_2 (0))}$. Integration yields,
$$m_1 (t) + m_2 (t) \leq -2m_1 (0)(2+ m_1 (0))t + m_1 (0) + m_2 (0).$$
Hence we find that $(m_1 (0), m_2 (0))\in \Omega \cap S^{c}$ leads to $m_2 (t_*) + m_1 (t_*) =0$ for some 
$$t_* \leq \frac{m_1 (0) + m_2 (0)}{2m_1 (0)(2+ m_1 (0))},$$
which is positive. This gives the desired result.
\end{proof}
%where the last equality holds because the closest point to the circle $(m_1 +1)^2 + (m_2 -1)^2 -2=0$ on the triangle $\lr^{(m_1 (0), m_2 (0))}$ is $(m_1 (0), -m_1 (0))$.

%now we show it thouches the line in finite time!

%\begin{align}
% A &= \ll\\
% B &= \lr
%\end{align}

$$$$

Now we are ready for the last step of proving Theorem \ref{main_thm}. Let $(m_1 (0), m_2 (0))\in \Omega$, then by Lemma \ref{time},
$$m_2 (t)+m_1 (t)\leq 0, \ \text{for all} \ t\geq t^*,$$
where $t^*$ is given in \eqref{tstar}. We note that if $m_2 (0)+m_1 (0)\leq 0$, then $t^* =0$.  Thus, when $t \geq t^*$, from \eqref{m1_eqn},
$$m' _1 \leq -m^2 _1 + m_2 -m_1\leq -m^2 _1 -m_1 -m_1 = -m_1 (m_1 +2),$$
and integration yields
$$\frac{1}{m_1 (t)}\geq \frac{1}{2} \bigg{\{} e^{2(t-t^*)}\bigg{(}  \frac{2}{m_1 (t^*)} +1 \bigg{)}\bigg{\}}-\frac{1}{2},$$
so that
$$m_1 (t) \rightarrow -\infty,$$
before $t$ reaches 
$$\frac{1}{2}\log\bigg{(} \frac{m_1 (t^*)}{2+m_1 (t^*)} \bigg{)} +t^*.$$
Finally, since $m_1 (t^*) \leq m_1 (0) <-2$, we obtain
$$\frac{1}{2}\log\bigg{(} \frac{m_1 (t^*)}{2+m_1 (t^*)} \bigg{)} +t^* \leq \frac{1}{2}\log\bigg{(} \frac{m_1 (0)}{2+m_1 (0)} \bigg{)} +t^* =: T^*, $$
and this proves Theorem \ref{main_thm}.

%\section{Acknowledgements and funding statement }
%The author has no competing interests.There is no external funding associated with this paper.

$$$$

\bibliographystyle{abbrv}

\end{document}